\theoremstyle{plain}
\newtheorem{theorem}{Theorem}[section]
\newtheorem{corollary}[theorem]{Corollary}
\newtheorem{lemma}[theorem]{Lemma}
\newtheorem{remark}[theorem]{Remark}
\numberwithin{theorem}{section}
\numberwithin{equation}{section}
\newcommand{\average}{{\mathchoice {\kern1ex\vcenter{\hrule height.4pt
width 6pt depth0pt} \kern-9.7pt} {\kern1ex\vcenter{\hrule
height.4pt width 4.3pt depth0pt} \kern-7pt} {} {} }}
\def\R{\mathbb{R}}
\renewcommand{\a }{\alpha }
\renewcommand{\b }{\beta }
\renewcommand{\d}{\delta }
\newcommand{\D }{\Delta }
\newcommand{\e }{\varepsilon }
\newcommand{\g }{\gamma}
\newcommand{\G }{\Gamma}
\newcommand{\vp }{\varphi }
\renewcommand{\O }{\Omega }
\newcommand{\be}{\begin{equation}}
\newcommand{\ee}{\end{equation}}
\newcommand{\de}{\partial}
\newcommand{\ti}{\widetilde}
\newcommand{\ra}{{\rangle}}
\newcommand{\la}{{\langle}}
\newcommand{\calL }{\mathcal{L}}
\newcommand{\calD }{\mathcal{D}}
\newcommand{\N}{\mathbb{N}}
\renewcommand{\epsilon}{\varepsilon}
\newcommand{\Ds}{ (-\D)^s}
\begin{document}

\title 
{Entire $s$-harmonic functions are affine}

\author[Mouhamed M. Fall]
{Mouhamed Moustapha Fall}
\address{M.M.F.: African Institute for Mathematical Sciences in Senegal, 
KM 2, Route de Joal, B.P. 14 18. Mbour, S\'en\'egal}
\email{mouhamed.m.fall@aims-senegal.org, mouhamed.m.fall@gmail.com}

\thanks{This work  is supported by the Alexander von Humboldt foundation and the author would
like to thank Tobias Weth and Krzysztof Bogdan for useful discussions. This work was completed while the author was visiting the Goethe-Universit\"at Frankfurt am Main and the Technische Universit\"at Chemnitz.
He is also very grateful to the referee for his/her detailed comments. 
The variety of
his/her substantial suggestions  helped him to improve  the first versions of this manuscript.
}

\keywords{Fractional laplacian, Liouville theorem,   Uniqueness,  Riesz Kernel, Entire $\alpha$-harmonic, Cauchy estimates.}
\subjclass[2010]{35R11, 42B37.}

 \begin{abstract}
   \noindent
   In this paper, we prove that solutions to the equation $(-\D)^s u=0$ in $\mathbb{R}^N$, for $s\in (0,1)$, are affine. This  allows us 
   to  prove the uniqueness of the   Riesz potential $|x|^{2s-N}$ in Lebesgue spaces.
 \end{abstract}

\maketitle



%
 \section{Introduction}
The classical Liouville theorem for harmonic functions states that \textit{ a bounded harmonic function in $\R^N$ is constant}, see for instance the particularly short proof by E. Nelson in \cite{EN}. The stronger version of it states that a nonnegative  harmonic function on $\R^N$ is constant.   In the case of the fractional Laplacian $-\Ds$, for $s\in(0,1)$, (see Section \ref{s:prem}), the strong form of the Liouville theorem holds as well and was proved by K. Bogdan et al. in \cite{BKN}. Applications   of   Liouville theorems for nonlocal operators in the study of 
nonlocal elliptic systems of equations can be found in \cite{RWXZ,R-O-S, FV, Abat}. \\
 The aim of this paper is to classify  all $s$-harmonic functions in $\R^N$,   thereby obtaining the  Liouville theorem for the fractional Laplacian as a particular case. 
%
%
\begin{theorem}\label{th:ClassHArm}
Every $s$-harmonic function  in $\R^N$ is affine, and constant if $s\in(0,1/2]$.
\end{theorem}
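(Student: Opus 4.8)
The plan is to combine interior Cauchy-type estimates for $s$-harmonic functions with the growth restriction that is built into the very notion of $s$-harmonicity. Recall that a function can be $s$-harmonic in $\R^N$ only if it lies in the weighted class $L^1_s:=L^1(\R^N,(1+|y|)^{-N-2s}\,dy)$ (otherwise $(-\D)^s u$ is not even defined pointwise), and that $s$-harmonic functions are $C^\infty$ in the interior. The first step is to convert the membership $u\in L^1_s$ into a pointwise growth bound: starting from the interior $L^\infty$ estimate for $(-\D)^s u=0$, which controls $\sup_{B_{R/2}(x_0)}|u|$ by the average $\frac{1}{|B_R|}\int_{B_R(x_0)}|u|$ plus the nonlocal tail $R^{2s}\int_{\R^N\setminus B_R(x_0)}|u(y)|\,|x_0-y|^{-N-2s}\,dy$, and splitting the tail into $\{|y|\le 4R\}$ and $\{|y|\ge 4R\}$, one dominates every term by $CR^{2s}\|u\|_{L^1_s}$ (for $|x_0|\le R$), obtaining
\[
\|u\|_{L^\infty(B_R)}\ \le\ C\,R^{2s}\,\|u\|_{L^1_s},\qquad R\ge 1 .
\]

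The second step eliminates the second derivatives. Here I would invoke the Cauchy estimates for $s$-harmonic functions (the technical heart, discussed below): if $u$ is $s$-harmonic in $B_R(x_0)$, then
\[
|D^2 u(x_0)|\ \le\ \frac{C}{R^2}\Big(\|u\|_{L^\infty(B_R(x_0))}+R^{2s}\!\!\int_{\R^N\setminus B_R(x_0)}\!\frac{|u(y)|}{|x_0-y|^{N+2s}}\,dy\Big).
\]
Inserting the growth bound of Step~1 (and again splitting the tail as above) yields $|D^2 u(x_0)|\le C\|u\|_{L^1_s}\,R^{2s-2}$; since $2s-2<0$ for every $s\in(0,1)$, letting $R\to\infty$ forces $D^2 u\equiv 0$, so $u$ is affine, $u(x)=a\cdot x+b$. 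For the sharpening when $s\le 1/2$, note that an affine function belongs to $L^1_s$ only if $\int_{\R^N}|a\cdot y|\,(1+|y|)^{-N-2s}\,dy<\infty$, which forces $a=0$ because the integral behaves like $\int_1^\infty r^{-2s}\,dr$ and diverges when $s\le 1/2$; hence $u$ is constant. (Alternatively, for $s<1/2$ one applies the first-order Cauchy estimate to obtain $|\n u(x_0)|\le C\|u\|_{L^1_s}R^{2s-1}\to 0$ directly.) In particular this recovers the Liouville theorem of \cite{BKN}: a bounded $s$-harmonic function is affine and bounded, hence constant.

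The main obstacle is the input to Step~2, namely the Cauchy estimates with the correct tail term and $R$-scaling. The natural route is to start from the fractional Poisson representation on a ball, $u(x)=\int_{\R^N\setminus B_R}P^s_{B_R}(x,y)\,u(y)\,dy$ for $x\in B_R$, with $P^s_{B_R}(x,y)=c_{N,s}\big((R^2-|x|^2)/(|y|^2-R^2)\big)^s|x-y|^{-N}$, to differentiate $P^s_{B_R}(\cdot,y)$ in $x$ at the center, and to estimate $|D^k_x P^s_{B_R}(0,y)|$ keeping exact track of the powers of $R$ and of $|y|$; an inductive scheme (in the spirit of Cauchy's classical argument for analytic functions) then gives factorial bounds $|D^k u(0)|\le C^k k!\,R^{-k}\big(\|u\|_{L^\infty(B_R)}+R^{2s}\,\mathrm{tail}\big)$, so in particular $u$ is real-analytic. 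An equivalent route is via the Caffarelli--Silvestre extension $\div(t^{1-2s}\n U)=0$ with vanishing weighted Neumann datum on $\{t=0\}$: after an even reflection across $\{t=0\}$ one applies the interior Schauder/De Giorgi--Nash--Moser theory for this degenerate (Muckenhoupt $A_2$) equation to $U$ and to its tangential derivatives, which again solve the same equation. In either approach the delicate point is to keep the dependence on $R$ fully explicit, so that at the scale of entire functions the tail contributes only $O(R^{2s})$ and is defeated by the $R^{-k}$ coming from differentiation as soon as $k\ge 2$.
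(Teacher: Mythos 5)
Your proposal is correct and follows essentially the same strategy as the paper: a Cauchy-type estimate for derivatives of $s$-harmonic functions derived from the Poisson kernel on $B_R$, combined with the membership $u\in\calL^1_s$, and letting $R\to\infty$ to conclude $D^\g u\equiv 0$ for $|\g|\geq 2s$. The paper streamlines the technical heart by mollifying the Poisson kernel in the radius (so that $u=u\star \Psi_{r_0}$ with $\Psi$ smooth, vanishing on $B(0,1)$ and satisfying $|D^{\g}\Psi(y)|\leq C|y|^{-N-2s-|\g|}$), which renders your Step~1 and the interior $L^\infty$ estimate unnecessary: the resulting bound $|D^{\g}u(0)|\leq C R^{2s-|\g|}\int_{|y|\geq R/4}|u(y)|\,|y|^{-N-2s}\,dy$ has a tail that itself tends to $0$, so it also disposes of the borderline case $s=1/2$, $|\g|=1$ without your separate membership argument.
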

The proof of this theorem is mainly based on  a Cauchy-type estimate for the derivatives of an  $s$-harmonic function. More precisely, given $s\in(0,1)$, $\g\in\N^N$  and a function $u$ which is $s$-harmonic  in the ball $B(0,R)$, we have the estimate
\be\label{eq:Cauchy-est} 
|D^\g u(0)|\leq C  R^{2s-|\g|}\int_{|y|\geq R/4}|u(y)||y|^{-N-2s}\, dy,
\ee
for some positive constant $C $ depending only on $N,\g$ and $s$, see Section \ref{s:Proofs}. This estimate is obtained from the Poisson kernel  representation formula for $s$-harmonic functions. We refer to  Section \ref{s:prem} for more  details.\\
  In the following, we denote by  $\calD'(\R^N)$  the dual of $C^\infty_c(\R^N)$ endowed with the usual topology.
An iteration argument based on  {Theorem} \ref{th:ClassHArm} allows to state the following result. 
\begin{theorem}\label{th:Liouvilles012Poly}
Assume that $s\in (0,1)$ and let $ u $  be a solution to the equation
$$
\Ds u= P \quad \textrm{ in } \calD'(\R^N),
$$
where $P$ is a polynomial.
 Then $u$ is affine and  $P=0$.
\end{theorem}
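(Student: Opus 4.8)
\textit{Proof strategy.} The plan is first to show that $u$ must be a polynomial, and then to conclude by a short elementary argument. Throughout I write $L^1_s:=L^1(\R^N,(1+|x|)^{-N-2s}\,dx)$ for the natural class in which $\Ds u$ is defined as an element of $\calD'(\R^N)$. If $P\equiv 0$ the assertion is exactly Theorem~\ref{th:ClassHArm}, so I may assume $P\not\equiv0$ and set $d:=\deg P\ge0$. As a preliminary step I would upgrade the regularity of $u$: since $\Ds u=P$ with $P\in C^\infty$, standard interior estimates for the fractional Laplacian on unit balls, iterated, give $u\in C^\infty(\R^N)$. (One could instead mollify, working with $u_\varepsilon=u*\rho_\varepsilon$, which solves $\Ds u_\varepsilon=P*\rho_\varepsilon$, still a polynomial of degree $d$ with $\varepsilon$-uniformly bounded coefficients, and then let $\varepsilon\to0$ at the end.)

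The heart of the argument would be the pointwise derivative bound
\[
|D^\gamma u(x_0)|\le C_\gamma\,(1+|x_0|)^{d+2s-|\gamma|},\qquad x_0\in\R^N,
\]
for every multi-index $\gamma$. To prove it I fix $x_0$, set $R:=1+|x_0|$, and on $B(x_0,R)$ decompose $u=h_R+w_R$, where $w_R(x):=\int_{B(x_0,R)}G_{B(x_0,R)}(x,y)P(y)\,dy$ is the Green potential of $P$ (so $\Ds w_R=P$ in $B(x_0,R)$ and $w_R=0$ outside) and $h_R:=u-w_R$ is $s$-harmonic in $B(x_0,R)$ and lies in $L^1_s$. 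For $w_R$ I would rescale $B(x_0,R)$ to the unit ball, turning the equation into $\Ds\widetilde w=\widetilde P$ in $B_1$ with $\widetilde w=0$ outside, where $\widetilde P(\xi)=R^{2s}P(x_0+R\xi)$ is a polynomial of degree $d$ with coefficients of size $O(R^{2s+d})$; the boundedness of the torsion function of $B_1$ together with interior estimates for $\Ds$ then give $\|\widetilde w\|_{C^{|\gamma|}(B_{1/2})}\le C_\gamma R^{2s+d}$, hence $|D^\gamma w_R(x_0)|=R^{-|\gamma|}|D^\gamma\widetilde w(0)|\le C_\gamma R^{2s+d-|\gamma|}$. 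For $h_R$ I would apply the Cauchy-type estimate \eqref{eq:Cauchy-est}, centered at $x_0$ with radius $R$, which bounds $|D^\gamma h_R(x_0)|$ by $C_\gamma R^{2s-|\gamma|}\int_{|y-x_0|\ge R/4}|h_R(y)|\,|y-x_0|^{-N-2s}\,dy$; on $\{|y-x_0|\ge R\}$, where $h_R=u$, one has $1+|y|\le 3|y-x_0|$, so this part is $\le C\|u\|_{L^1_s}$, and on the annulus $\{R/4\le|y-x_0|<R\}$ the bound $|w_R|\le CR^{2s}\sup_{B(x_0,R)}|P|$ (again from the torsion function) makes the contribution $\le CR^{d}$. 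Thus $|D^\gamma h_R(x_0)|\le C_\gamma R^{2s+d-|\gamma|}$, and adding the two pieces yields the claimed bound.

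Given the bound, for every $\gamma$ with $|\gamma|\ge d+1$ the exponent $d+2s-|\gamma|$ is strictly below $2s$, so $D^\gamma u\in L^1_s$; moreover $\Ds(D^\gamma u)=D^\gamma(\Ds u)=D^\gamma P=0$ in $\calD'(\R^N)$, so $D^\gamma u$ is $s$-harmonic and hence, by Theorem~\ref{th:ClassHArm}, affine. Taking $|\gamma|=d+1$ and differentiating twice more shows $D^\beta u\equiv0$ for all $|\beta|=d+3$; combined with $u\in C^\infty$ this forces $u$ to be a polynomial. Finally, a nonzero monomial $x^\alpha$ lies in $L^1_s$ only when $|\alpha|<2s$, and since $2s\in(0,2)$ this forces $\deg u\le1$, with $\deg u=0$ when $s\in(0,1/2]$; hence $u$ is affine, and constant if $s\in(0,1/2]$. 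For such $u$ a direct computation (equivalently, $\Ds$ annihilates polynomials of degree $<2s$) gives $\Ds u=0$, and therefore $P=\Ds u=0$.

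The step I expect to be the main obstacle is the derivative bound, and within it the control of $D^\gamma w_R(x_0)$ at the interior point $x_0$: one cannot differentiate under the integral sign, since $D^\gamma_x G_{B(x_0,R)}(x,y)$ is not integrable in $y$ near $y=x_0$ once $|\gamma|\ge2$, so one must instead exploit the scaling of the Green potential and interior regularity for $\Ds$, and tracking the powers of $R$ through the rescaling is the delicate point.
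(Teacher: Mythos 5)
Your proposal is correct in substance, but it takes a genuinely different and much heavier route than the paper. The paper proves Theorem \ref{th:Liouvilles012Poly} by induction on $\deg P$ using finite differences: for $h\in\R^N$ the function $u_h(x)=u(x+h)-u(x)$ again lies in $\calL^1_s$ and solves $\Ds u_h=P(\cdot+h)-P$, a polynomial of degree one lower, so after finitely many steps one is reduced to Theorem \ref{th:ClassHArm}; knowing that $u_h$ is affine for every $h$ forces $u$ to be a polynomial of degree at most two, and membership in $\calL^1_s$ then forces it to be affine, whence $P=\Ds u=0$. This uses nothing beyond Theorem \ref{th:ClassHArm} and bypasses exactly the points you flag as delicate: no regularity upgrade, no Green-potential decomposition on $B(x_0,1+|x_0|)$, no torsion-function comparison or interior Schauder estimates for $\Ds$ (all of which you must import from the literature), and no commutation $\Ds(D^\g u)=D^\g(\Ds u)$ in $\calD'(\R^N)$ --- a step you assert without proof and which, since $\Ds$ is defined by duality, needs an integration by parts over all of $\R^N$; it does go through for $|\g|\geq \deg P+1$ thanks to your growth bounds together with the decay $|D^\d(\Ds\vp)(x)|\leq C(1+|x|)^{-N-2s-|\d|}$ for $\vp\in C^\infty_c(\R^N)$, but it should be spelled out. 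What your approach buys in exchange is a quantitative estimate $|D^\g u(x)|\leq C_\g(1+|x|)^{\deg P+2s-|\g|}$, which identifies $u$ as a polynomial of controlled degree before invoking the integrability constraint, and which would extend to right-hand sides of polynomial growth rather than exact polynomials. If you want to keep your scheme but avoid the machinery, note that replacing the derivatives $D^\g$ by iterated differences $u_h$ removes both the smoothness step and the commutation issue, and this is precisely the paper's shortcut.
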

 Another consequence of the main theorem which is of independent
interest is the following result.
\begin{corollary}  \label{cor:cor0}
Let $p\in[1,\infty)$ and $u\in L^p(\R^N)$ be  such that
$$
\Ds u=0 \quad \textrm{ in } \calD'(\R^N).
$$
Then  $u\equiv 0$.
\end{corollary}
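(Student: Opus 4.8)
The plan is to combine Theorem~\ref{th:ClassHArm} with the elementary observation that a nonzero affine function has infinite $L^p(\R^N)$ norm for every finite $p$; the only real work is to pass from the distributional hypothesis to the classical framework of that theorem, which I would do by mollification. Throughout, write $L^1_s:=\bigl\{v\in L^1_{\loc}(\R^N):\int_{\R^N}|v(y)|(1+|y|)^{-N-2s}\,dy<\infty\bigr\}$, the natural class on which $\Ds$ acts. First I would check that $u\in L^1_s$: by Hölder's inequality,
\[
\int_{\R^N}\frac{|u(y)|}{(1+|y|)^{N+2s}}\,dy\le \|u\|_{L^p(\R^N)}\,\bigl\|(1+|\cdot|)^{-N-2s}\bigr\|_{L^{p'}(\R^N)},
\]
and the last factor is finite because $(N+2s)p'>N$ when $1<p'<\infty$, while for $p=1$ one simply uses $(1+|y|)^{-N-2s}\le1$. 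Hence $\Ds u$ is a well-defined element of $\calD'(\R^N)$ and the hypothesis $\Ds u=0$ is meaningful.

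Next I would mollify. With $\rho_\e(x)=\e^{-N}\rho(x/\e)$, $\rho\in C^\infty_c(\R^N)$, set $u_\e:=u*\rho_\e$. Then $u_\e\in C^\infty(\R^N)\cap L^p(\R^N)$ by Young's inequality, and $u_\e\in L^1_s$ by the estimate above applied to $u_\e$. Since $\Ds$ commutes with convolution by a test function — here one uses that $\Ds\varphi(x)=O(|x|^{-N-2s})$ for $\varphi\in C^\infty_c(\R^N)$, so that the pairings defining $\Ds(u*\rho_\e)$ converge against elements of $L^1_s$ — one obtains $\Ds u_\e=(\Ds u)*\rho_\e=0$ in $\calD'(\R^N)$; and for a smooth $L^1_s$ function the distributional and the pointwise (singular-integral) fractional Laplacians coincide, so $u_\e$ is $s$-harmonic in $\R^N$ in the classical sense. (This reduction is in fact exactly the content of Theorem~\ref{th:Liouvilles012Poly} with $P=0$, which one could quote instead.)

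Then I would conclude as follows. By Theorem~\ref{th:ClassHArm}, $u_\e$ is affine, say $u_\e(x)=a_\e+b_\e\cdot x$. A nonzero affine function is not in $L^p(\R^N)$ for any $p\in[1,\infty)$, whereas $\|u_\e\|_{L^p(\R^N)}\le\|u\|_{L^p(\R^N)}<\infty$; therefore $u_\e\equiv0$ for every $\e>0$. Letting $\e\to0$ and using $u_\e\to u$ in $L^p(\R^N)$ gives $u\equiv0$. (The restriction $p<\infty$ is essential here: it is precisely what rules out the constant and linear $s$-harmonic functions allowed by Theorem~\ref{th:ClassHArm}.)

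The step I expect to be the main obstacle is the middle one, the regularity bridge: one must verify carefully that convolution with $\rho_\e$ really commutes with $\Ds$ when the latter is only defined by duality against $C^\infty_c(\R^N)$ on the class $L^1_s$, and that on the smooth function $u_\e$ the pointwise singular-integral definition of $\Ds u_\e$ agrees with the distributional one. By contrast, the Hölder bound and the triviality of affine functions in $L^p$ are routine.
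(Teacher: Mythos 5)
Your proof is correct and follows essentially the same route as the paper: the entire content of the paper's one-line proof is the H\"older embedding $L^p(\R^N)\subset\calL^1_s$, after which Theorem \ref{th:ClassHArm} gives that $u$ is affine and the finiteness of the $L^p$-norm with $p<\infty$ forces $u\equiv 0$. The mollification bridge you flag as the main obstacle is in fact unnecessary here, since in this paper ``$s$-harmonic'' is \emph{defined} as the distributional equation $\Ds u=0$ for $u\in\calL^1_s$ and smoothness is automatic from the identity \eqref{eq:uequstarPsi}, so Theorem \ref{th:ClassHArm} applies to $u$ directly.
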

Combining  {Corollary}  \ref{cor:cor0} and  the Hardy-Littlewood-Sobolev inequality, we have
a uniqueness result.
\begin{corollary}[Uniqueness of Riesz potential] \label{th:uniRiesz}
Let $s\in(0,1)$, $ 1<p<\frac{N}{2s}$ and  $f\in L^p(\R^N)$.
Then there exists a unique $u\in L^{\frac{Np}{N-2sp}}(\R^N)$ such
that
$$
\Ds u=f \quad \textrm{ in } \calD'(\R^N),
$$
and $u$ is given by
$$
u(x)= \a_{N,s}\int_{\R^N}\frac{f(y)}{|x-y|^{N-2s}}\, dy,
$$
where
$$
\a_{N,s}=\pi^{N/2} 2^{2s}\frac{\G(s)}{\G( (N-2s)/2)}.
$$
\end{corollary}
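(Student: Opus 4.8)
The plan is to treat uniqueness and existence separately. For \emph{uniqueness}, I would first observe that since $1<p<\frac{N}{2s}$ the exponent $q:=\frac{Np}{N-2sp}$ is finite, and since $p>1>\frac{N}{N+2s}$ it also satisfies $q>1$, so $q\in[1,\infty)$. If $u_1,u_2\in L^{q}(\R^N)$ were two solutions of $\Ds u=f$ in $\calD'(\R^N)$, then $w:=u_1-u_2\in L^{q}(\R^N)$ would satisfy $\Ds w=0$ in $\calD'(\R^N)$ by linearity of the distributional action of $\Ds$, and Corollary \ref{cor:cor0}, applied with the exponent $q$, would force $w\equiv 0$. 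Hence there is at most one solution in $L^q(\R^N)$.

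For \emph{existence}, I would set $u(x):=\a_{N,s}\int_{\R^N}|x-y|^{2s-N}f(y)\,dy$. The Hardy--Littlewood--Sobolev (HLS) inequality, which applies precisely because $1<p<\frac{N}{2s}$, shows that this integral converges absolutely for almost every $x$ and that $u\in L^{q}(\R^N)$ with $\|u\|_{L^{q}}\le C\|f\|_{L^p}$, $q=\frac{Np}{N-2sp}$. The remaining task is to verify that $\Ds u=f$ in $\calD'(\R^N)$, i.e. that $\int_{\R^N}u\,\Ds\varphi\,dx=\int_{\R^N}f\,\varphi\,dx$ for every $\varphi\in C_c^\infty(\R^N)$. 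Here I would note that $\Ds\varphi$ is bounded, smooth, and decays like $|x|^{-N-2s}$ at infinity, hence lies in $L^{q'}(\R^N)$, so the left-hand side makes sense; then I would insert the convolution defining $u$ and interchange the order of integration, which is legitimate because the associated double integral of absolute values is finite by the bilinear form of the HLS inequality (the exponent balance $\frac1p+\frac{1}{q'}+\frac{N-2s}{N}=2$ being exactly met here). This reduces the left-hand side to $\int_{\R^N}f(y)\,\Ds\big(\a_{N,s}|\cdot|^{2s-N}*\varphi\big)(y)\,dy$, and since $\a_{N,s}|x|^{2s-N}$ is, with the normalization fixed in Section \ref{s:prem}, the fundamental solution of $\Ds$, the inner expression equals $\varphi(y)$, so the left-hand side is $\int_{\R^N}f\,\varphi\,dx$, as needed. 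Combined with uniqueness and the HLS bound, this gives the corollary, including the Lebesgue exponent $\frac{Np}{N-2sp}$.

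The step I expect to be the main obstacle is exactly this last verification that $\Ds u=f$ in $\calD'(\R^N)$ for a $u$ that is merely of class $L^q(\R^N)$: one has to read $\Ds$ through its duality pairing against test functions, justify the interchange of integrations via the bilinear HLS inequality, and appeal to the fundamental-solution property of the Riesz kernel with the precise constant $\a_{N,s}$ normalized earlier in the paper. Everything else --- the finiteness and range of $q$, the mapping property of the Riesz potential, and the reduction of uniqueness to Corollary \ref{cor:cor0} --- is routine.
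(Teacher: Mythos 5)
Your argument is correct, and the uniqueness half is identical to the paper's (difference of two solutions lies in $L^{q}$ with $q=\frac{Np}{N-2sp}\in[1,\infty)$ and is $s$-harmonic, hence vanishes by Corollary \ref{cor:cor0}). For existence you take a different, more direct route. The paper approximates $f$ by $f_n\in C^\infty_c(\R^N)$, invokes a lemma of Bogdan--Byczkowski to get $\Ds u_n=f_n$ in $\calD'(\R^N)$ for the smooth data, and then passes to the limit using the HLS convergence $u_n\to\ti u$ in $L^{\frac{Np}{N-2sp}}$, hence in $\calL^1_s$, so that $\int u_n\,\Ds\vp\to\int \ti u\,\Ds\vp$. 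You instead verify $\int u\,\Ds\vp=\int f\vp$ directly by Fubini, which is legitimately justified by the bilinear Hardy--Littlewood--Sobolev inequality with the exponent balance you check, and then reduce to the identity $\a_{N,s}\,|\cdot|^{2s-N}\star\Ds\vp=\vp$. Note that this last identity is exactly the content of the lemma the paper cites for smooth compactly supported data (including the precise constant $\a_{N,s}$), so both proofs ultimately rest on the same fundamental-solution fact; you should cite it or prove it (e.g.\ via the Fourier transform) rather than merely assert it. What your version buys is the elimination of the approximation step and the limit passage in $\calL^1_s$; what the paper's version buys is that the only integrability issue to handle is the convergence $u_n\to\ti u$, with the pointwise convolution identity needed only for nice data. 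Both are complete modulo that one cited ingredient.
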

The paper is organized as follows. In  Section \ref{s:prem} we   collect some basic facts concerning the fractional Laplacian $-\Ds$  and $s$-harmonic functions. Finally, in Section \ref{s:Proofs}    we prove the Cauchy-type estimate \eqref{eq:Cauchy-est}, the main result and its corollaries.
\bigskip

\noindent
\textbf{Note added in proof}:  We  mention that after this paper was submitted, Liouville-type  results for a class of  nonlocal operators   were proved in \cite{CDL} and \cite{FW-Liouville} using  Fourier transform.

\section{Preliminaries}\label{s:prem}
This section is devoted to recall some basic notions about $s$-harmonic functions. We refer the reader to \cite[Section 3]{BB-Schr}.   Let $\calL^1_s$ denote the
space of all measurable functions $u:\R^N\to
\R$ such that $$\int_{\R^N}\frac{|u(x)|}{1+|x|^{N+2s}}dx<\infty.$$
For functions
$\vp \in C^2(\R^N)\cap\calL^1_s$, the fractional Laplacian $-(-\Delta)^s$ is defined by
\begin{equation}
  \label{eq:3}
-\Ds \vp(x)=C_{N,s}\lim_{\e\to0}\int_{|x-y|>\e}\frac{\vp(y)-\vp(x)}{|y-x|^{N+2s}}\,dy \qquad\textrm{ for all } x\in\R^N,
\end{equation}
where $C_{N,s}=s(1-s)\pi^{-N/2}4^s\frac{\Gamma(\frac{N}{2}+s)}{\Gamma(2-s)}$.\\
For $u\in \calL^1_s$, the expression $\Ds u$ defines a distribution on every open set $\O\subset\R^N$ by 
$$
\la \Ds u, \vp \ra =\int_{\R^N}u(x)\Ds\vp(x)\, dx  \qquad \textrm{ for every $\vp\in C^\infty_c(\O)$.} 
$$
In the case where $\Ds u= 0$ in $\calD'(\O)$,   we will say that    \textit{$u$ is $s$-harmonic} in $\O$.\\
%
%
%
We note that affine functions $u$  belong to $\calL^1_s$ if $s>1/2 $ and constant functions $u$ belong to $\calL^1_s$ if  $s\in(0,1/2]$. Moreover, by using   \eqref{eq:3}, in both cases, we can see that $\Ds u (x)=0$ for every $x\in\R^N$.  Furthermore, thanks to  \cite[Lemma 3.3]{BB-Schr}, we have $\Ds u=0$ in $\calD'(\R^N)$. \\
%
The fractional Laplacian has an explicit Poisson kernel with respect to the ball $B(x,r)$ (see \cite{BGR}). It  is given by
\be\label{eq:PK}
P_r(x,y)= \begin{cases} \displaystyle \b_{N,s}
 \frac{(r^2-|x|^2)^s}{(|y|^2-r^2)^s}|y-x|^{-N}\quad \textrm{ for }
 |x|< r,\,|y|>r, \vspace{3mm}\\
\displaystyle  0  \qquad \qquad \textrm{otherwise},
 \end{cases}
 \ee
 where $\b_{N,s}=\G(N/2)\pi^{-N/2-1}\sin(s\pi)$. Therefore (see also \cite{BKN}), if $u$ is  $s$-harmonic  in $\O$ then for every  ball $B(a,r)\subset\subset\O$ we have 
 $$
 u(x)=\int_{\R^N}P_r(x-a,y-a) u(y)\, dy \qquad \textrm{ for all } x\in B(a,r).
 $$
We now consider the regularization
of $P_r$ as   in \cite{BB-Schr}. To this end, we pick a   function $\phi\in C^\infty_c(1,4)$  such that  $\int_{\R}\phi(r)\,dr =1$ and define $\Psi: \R^N\to \R$ by  
$$
\Psi(y)=
\int_{1}^{4}{P_r(0,y)} \phi(r)\,dr=\b_{N,s}
|y|^{-N}\int_{\min(1,|y|)}^{\min(4,|y|)}
r^{2s}(|y|^2-r^2)^{-s}\phi(r)\,dr.
$$
  Observe that, if $|y|\leq 1$ then $(0,|y|)\cap (1,4)=\emptyset$ and thus  
\be \label{eq:Psieq0inBr0}
\textrm{$\Psi(y)=0 \qquad $ for every $y\in B(0,1) $.}
\ee
Furthermore, as shown e.g. in  \cite[Lemma 3.11]{BB-Schr}, we have $\Psi\in C^\infty(\R^N)$. Moreover, for every      $\g\in\N^N$ there holds   
\be 
\label{eq:estPisr0}
|D^{\g}\Psi (y)|\leq{C}\,{|y|^{-N-2s-|\g|}}\qquad \textrm{ for every } y\in \R^N\setminus\{0\},
\ee
where $C=C(N,\g,s)$  denotes, here and in the following, a positive constant depending only on $N$, $\g$ and $s$.\\
 We define $\Psi_{r_0}(y)=r_0^{-N}\Psi(y/r_0)$, for $y\in\R^N$ and $r_0>0$. Then, for any  $s$-harmonic function  $u$  in an open set $\O\subset\R^N$, we have   
\be \label{eq:uequstarPsi}
u(x)=u\star \Psi_{r_0}(x) \qquad \textrm{ for all almost every } x\in \O_{4r_0},
\ee
where  $\O_{4r_0}=\left\{x\in\O\,:\,\textrm{dist}(x,\R^N\setminus\O)>4r_0\right\}$,
 see    \cite[Lemma 2.6]{FW-half} or \cite[Page 65]{BB-Schr}.  We will therefore assume, in the sequel, that  $s$-harmonic functions in some open set are  smooth in that set.
%
\section{Proof of the main result and its consequences}\label{s:Proofs}
The following result (from which we will derive our main result)  can be seen as a nonlocal version of the Cauchy estimate   for bounded harmonic functions, see e.g. \cite[Chapter 2]{ABR}. 
\begin{lemma}\label{lem:Est-harm}
 For every $\g\in\N^N$, there exists a constant $C>0$ only depending on  $N$, $\g$ and $s$ such that  for every function  $u$ which is  $s$-harmonic in $B(0,R)$,
$$
|D^{\g} u(0)|\leq C \, R^{2s-|\g|}\,\int_{|y|\geq  R/4}{|u(y)|}{|y|^{-N-2s}}\, dy.
$$
\end{lemma}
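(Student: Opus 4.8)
The plan is to exploit the representation formula \eqref{eq:uequstarPsi}, which after rescaling expresses the value of an $s$-harmonic function at the center of a ball as a convolution against the smooth, fast-decaying kernel $\Psi_{r_0}$. First I would fix $R>0$ and a function $u$ that is $s$-harmonic in $B(0,R)$, and choose $r_0=R/4$ so that $0\in\O_{4r_0}$ with $\O=B(0,R)$; then \eqref{eq:uequstarPsi} gives, for almost every $x$ near $0$,
\[
u(x)=u\star\Psi_{r_0}(x)=\int_{\R^N}u(y)\,\Psi_{r_0}(x-y)\,dy .
\]
Since we have agreed that $s$-harmonic functions are smooth, both sides are smooth and this identity holds at $x=0$ and can be differentiated under the integral sign.

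Next I would differentiate the convolution in $x$: for any multi-index $\g\in\N^N$,
\[
D^{\g}u(0)=\int_{\R^N}u(y)\,(D^{\g}\Psi_{r_0})(-y)\,dy
=\int_{\R^N}u(y)\,r_0^{-N-|\g|}(D^{\g}\Psi)(-y/r_0)\,dy ,
\]
using the scaling $\Psi_{r_0}(z)=r_0^{-N}\Psi(z/r_0)$, so that $D^{\g}\Psi_{r_0}(z)=r_0^{-N-|\g|}(D^{\g}\Psi)(z/r_0)$. Here one needs to justify the differentiation under the integral sign, which follows from the pointwise bound \eqref{eq:estPisr0} together with $u\in\calL^1_s$: on any compact $x$-neighbourhood of $0$ the integrand is dominated by $C\,|u(y)|\,(1+|y|)^{-N-2s-|\g|}$, which is integrable. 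Then I would take absolute values and estimate $|D^{\g}\Psi(-y/r_0)|$ by \eqref{eq:estPisr0}, obtaining
\[
|D^{\g}u(0)|\le C\,r_0^{-N-|\g|}\int_{\R^N}|u(y)|\,\bigl(|y|/r_0\bigr)^{-N-2s-|\g|}\,dy
= C\,r_0^{2s}\int_{\R^N}|u(y)|\,|y|^{-N-2s-|\g|}\,dy ,
\]
valid a priori over the region where $\Psi(-y/r_0)\ne0$.

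The final point is to control the domain of integration. By \eqref{eq:Psieq0inBr0}, $\Psi$ vanishes on $B(0,1)$, hence $\Psi_{r_0}$ vanishes on $B(0,r_0)$, so the integral above is supported in $\{|y|\ge r_0=R/4\}$; on this set one has $|y|^{-N-2s-|\g|}\le (R/4)^{-|\g|}|y|^{-N-2s}$, which together with $r_0^{2s}=(R/4)^{2s}$ absorbs all $R$-powers into the single factor $R^{2s-|\g|}$ and a constant depending only on $N,\g,s$. This yields exactly
\[
|D^{\g}u(0)|\le C\,R^{2s-|\g|}\int_{|y|\ge R/4}|u(y)|\,|y|^{-N-2s}\,dy .
\]
The only genuinely delicate step is the justification of differentiating under the integral (i.e.\ that the convolution formula, which holds a priori only a.e.\ and is stated for the function, upgrades to a smooth pointwise identity whose derivatives are given by convolution with $D^{\g}\Psi_{r_0}$); this is where one invokes smoothness of $s$-harmonic functions and the decay estimate \eqref{eq:estPisr0}. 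Everything else is bookkeeping with the scaling of $\Psi_{r_0}$ and the support property \eqref{eq:Psieq0inBr0}.
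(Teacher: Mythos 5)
Your argument is essentially the paper's proof: convolution with $\Psi_{r_0}$, differentiation under the integral justified by \eqref{eq:estPisr0}, the support property \eqref{eq:Psieq0inBr0}, and the scaling bookkeeping are all exactly the steps the author carries out. The one place where your write-up is not literally correct is the opening choice $r_0=R/4$: with $\O=B(0,R)$ the set $\O_{4r_0}$ is defined by the \emph{strict} inequality $\dist(x,\R^N\setminus\O)>4r_0$, and since $\dist(0,\R^N\setminus B(0,R))=R=4r_0$ for this choice, the set $\O_{4r_0}$ is empty, so \eqref{eq:uequstarPsi} cannot be invoked at $x=0$ with $r_0=R/4$. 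Note also that simply taking a smaller fixed radius (say $r_0=R/5$) does not repair this for free, because you would end up with the integral over $\{|y|\ge R/5\}$, which is \emph{larger} than the integral over $\{|y|\ge R/4\}$ appearing in the statement.

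The fix is exactly the paper's device: take $r_0\in(0,R/4)$, run your computation verbatim to get
$|D^{\g}u(0)|\le C\,r_0^{2s-|\g|}\int_{|y|\ge r_0}|u(y)|\,|y|^{-N-2s}\,dy$,
and then let $r_0\to R/4$; the prefactor converges to $C\,(R/4)^{2s-|\g|}$ and the integral converges to the one over $\{|y|\ge R/4\}$ (continuity from above, the integrals being finite since $u\in\calL^1_s$). With this one-line limiting argument inserted, your proof coincides with the paper's.
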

\begin{proof}
 Let $u$ be  an $s$-harmonic function in $B(0,R)$ and $r_0\in (0,R/4)$. Then by \eqref{eq:uequstarPsi} we have
$$ 
u(x)=u\star \Psi_{r_0}(x) \qquad \textrm{ for all } x\in B(0, R-4r_0).
$$
 By \eqref{eq:Psieq0inBr0}, \eqref{eq:estPisr0} and the   dominated convergence theorem, we deduce that  
$$
D^\g u(0)=u\star D^\g  \Psi_{r_0}(0) \qquad \textrm{ for all }   \g\in \N^N.
$$
Using once more \eqref{eq:Psieq0inBr0} and  \eqref{eq:estPisr0}, we get 
\begin{align*}
|D^\g  u(0)|= \left| \int_{|y|\geq r_0 }u(y)D^\g\Psi_{r_0}(-y)\, dy\right|\leq C \,r_0^{2s}\int_{|y|\geq r_0 }
|u(y)|\, |y|^{-N-2s-|\g|}\,dy.
\end{align*}
It follows that 
\begin{align*}
|D^\g  u(0)| \leq C \,r_0^{2s-|\g|}\int_{|y|\geq r_0 }
|u(y)|\, |y|^{-N-2s}\,dy.
\end{align*}
Letting $r_0\to  R/4$, we get  the desired estimate.
\end{proof}
%
%
As a consequence of  Lemma \ref{lem:Est-harm}, we have the following result.
\begin{corollary}\label{Cor:est_u}
Let $\O$ be a nonempty open set of $\R^N$ such that $\O\neq\R^N$. Then   for every $\g\in\N^N$, there exists a constant $C>0$ only depending on  $N$, $\g$ and $s$ such that  for every  function $u$ which is $s$-harmonic  in $\O$,
$$
|D^{\g} u(x)|\leq C \, \d_{\O}^{2s-|\g|}(x)\,\int_{|y|\geq  \frac{\d_{\O}(x)}{4}}{|u(y)|}{|y|^{-N-2s}}\, dy \qquad \textrm{ for all } x\in\O,
$$
where $\d_{\O}(x)=\textrm{dist}(x,\R^N\setminus\O)$.
\end{corollary}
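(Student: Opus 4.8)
The plan is to read Lemma~\ref{lem:Est-harm} as exactly the asserted inequality in the model case $x=0$ with $\O$ a ball, and to obtain the general statement by translating the point $x$ to the origin.

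First I would fix $x\in\O$ and set $R:=\d_{\O}(x)$. Since $\O\neq\R^N$ we have $0<R<\infty$, and by definition of the distance function $B(x,R)\subset\O$, equivalently $B(0,R)\subset\O-x$. I would then pass to the translate $v:=u(x+\cdot)$. The only step that really needs justification is that $v$ again meets the standing hypotheses of Section~\ref{s:prem}: it lies in $\calL^1_s$ because that space is invariant under the fixed translation by $x$ (the weight $1+|y|^{N+2s}$ being comparable to $1+|x+y|^{N+2s}$), and it is $s$-harmonic in $\O-x$ because $(-\D)^s$ commutes with translations --- for $\va\in C^\infty_c(\O-x)$ one has $(-\D)^s[\va(\cdot-x)](z)=[(-\D)^s\va](z-x)$, so $\la(-\D)^sv,\va\ra=\la(-\D)^su,\va(\cdot-x)\ra=0$ since $\va(\cdot-x)\in C^\infty_c(\O)$. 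In particular $v$ is $s$-harmonic in the open ball $B(0,R)$, so the mollified representation \eqref{eq:uequstarPsi}, hence Lemma~\ref{lem:Est-harm}, is available for $v$ on $B(0,R)$.

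Next I would simply apply Lemma~\ref{lem:Est-harm} to $v$ on $B(0,R)$: for each $\g\in\N^N$ there is $C=C(N,\g,s)$ with
\[
|D^\g v(0)|\le C\,R^{2s-|\g|}\int_{|y|\ge R/4}|v(y)|\,|y|^{-N-2s}\,dy .
\]
Reading this back in the original data --- $D^\g v(0)=D^\g u(x)$, $R=\d_{\O}(x)$, and replacing $v$, $0$, $\O-x$ by $u$, $x$, $\O$ --- it becomes
\[
|D^\g u(x)|\le C\,\d_{\O}^{2s-|\g|}(x)\int_{|y|\ge \d_{\O}(x)/4}|u(y)|\,|y|^{-N-2s}\,dy ,
\]
which is the claim. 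The constant is exactly the one produced by Lemma~\ref{lem:Est-harm}, so it depends only on $N$, $\g$ and $s$ and is independent of $\O$ and $u$, as required.

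The step to be careful about is therefore the reduction itself: one must verify that passing from $(\O,u)$ to $(\O-x,\,u(x+\cdot))$ preserves both membership in $\calL^1_s$ and $s$-harmonicity on the shifted domain, since Lemma~\ref{lem:Est-harm} rests on the Poisson-kernel and mollification machinery of Section~\ref{s:prem}, which is formulated for functions of $\calL^1_s$ that are $s$-harmonic on the relevant open set. Once this is in hand, the corollary is nothing but Lemma~\ref{lem:Est-harm} applied on the ball $B(x,\d_{\O}(x))\subset\O$ and read off at its centre $x$.
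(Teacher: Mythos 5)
Your proof is correct and is exactly the paper's: translate $u$ by $x$, note that $B(0,\d_{\O}(x))\subset\O-x$, and apply Lemma~\ref{lem:Est-harm} with $R=\d_{\O}(x)$; your extra verification that translation preserves membership in $\calL^1_s$ and $s$-harmonicity is a welcome detail the paper leaves implicit. The only caveat is that the change of variables actually yields $\int_{|y-x|\geq \d_{\O}(x)/4}|u(y)|\,|y-x|^{-N-2s}\,dy$ rather than the integral centred at the origin as displayed in the statement --- an imprecision present in the paper's own one-line proof as well, and harmless for the intended application, where $x$ is fixed and $\d_{\O}(x)\to\infty$.
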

\proof
By assumption, $\d_{\O}(x)<\infty$   for every  $x\in\O$. Since  $B(x,\d_{\O}(x))\subset\O$, applying Lemma \ref{lem:Est-harm} to the function $y\mapsto u(y+x)$ and $R=\d_{\O}(x)$, we get the desired result.$\square$

\bigskip
\noindent
\textbf{Proof of Theorem \ref{th:ClassHArm}. }\\
\noindent
  Let $x\in\R^N$ and $r>0$. We   apply Corollary \ref{Cor:est_u}  with $\O=B(x,r)$  and   $|\g|\geq 2s$. Then, letting $r\to\infty$, we get    $|D^{\g} u(x) |=0$ for every $|\g|\geq 2s$ and $x\in\R^N$. The proof of the theorem is thus completed.   $\square$

\begin{remark}
It is well known that there are smooth functions $u$ --- hence in $L^1_{loc}(\R^N)$ --- satisfying $\D u=0$ in $\calD'(\R^N)$ for $N\geq 2$  which are not polynomials. Therefore a natural question arises:  does there exist a larger  space of distributions, strictly containing $\calL^1_s$, where the fractional laplacian is appropriately defined and where there are nontrivial entire $s$-harmonic functions which are not affine?
\end{remark}
\noindent
\textbf{Proof of Theorem \ref{th:Liouvilles012Poly}.}
 The proof will be
done by  induction. Suppose $\ell$ is the degree of $P$.
Assume that  $\ell=0$ so that  $P$ is a constant. Let
$h\in\R^N  $ and  $u_h(x)={u(x+h )-u(x)}$. It is clear that
$u_h\in \calL^1_s$. In addition $\Ds u_h =0$. It follows from Theorem \ref{th:ClassHArm} that
$\de_{i,j}u_h(0)=0$ and therefore $ \de_{i,j}u(h)= \de_{i,j}u(0) $ for
every $h\in\R^N$. This implies that $u$ is a second order polynomial and since it belongs to $\calL^1_s$, it is affine.

 Now assume that the
result holds true for a polynomial of degree up to  $\ell\geq0$ and suppose that $\Ds u= P_{\ell+1}$, a polynomial of degree $\ell+1$.
Then, for $h\in\R^N$, using the  binomial formula we can see that $\Ds u_h=
P_{\ell,h} $, where $ P_{\ell,h}$ is a polynomial of degree $\ell$.
It follows  from  our assumption that $u_h$ is affine for any $h\in\R^N$. This again implies that 
$u$ is a second order polynomial and thus an affine function,  since it belongs to $\calL^1_s$.
$\square$\\

\noindent
 \textbf{Proof of Corollary \ref{cor:cor0}.} 
We just note that   $L^q(\R^N)\subset \calL^1_s$ for every $q\in[1,\infty]$ by H\"{o}lder's inequality.  $\square$\\

\noindent \textbf{Proof of Corollary \ref{th:uniRiesz}.}
 We define the function $\ti{u}(x)=\a_{N,s}\int_{\R^N}\frac{f(y)}{|x-y|^{N-2s}}\, dy$.   Let $f_n\in C^\infty_c(\R^N)$ be  such that $f_n\to f$ in $L^p(\R^N)$.
 Define $u_n(x)=\a_{N,s}\int_{\R^N}\frac{f_n(y)}{|x-y|^{N-2s}}\, dy$. By  the  Hardy-Littlewood-Sobolev inequality (see
\cite[Theorem 4.3]{LL}), we have $u_n\to \ti{u}$ in $L^{\frac{Np}{N-2sp}}(\R^N)$. In particular, $u_n\to\ti{u}$ in $\calL^1_s$ by H\"{o}lder's inequality. Thanks to \cite[Lemma 5.3]{BB}, we have    $\Ds u_n= f_n$ in $\calD'(\R^N)$. Passing to the limit as $n\to \infty$,  we deduce that 
  $\Ds \ti{u}= f$ in $\calD'(\R^N)$.  Finally, if $u\in L^{\frac{Np}{N-2sp}}(\R^N) $ is an arbitrary solution to $\Ds u=f$ in $\calD'(\R^N)$ then $\Ds (u-\ti{u})=0$ in $\calD'(\R^N)$. We thus conclude, from Corollary \ref{cor:cor0}, that $u=\ti{u}$. $\square$\\

    \label{References}


\begin{thebibliography}
   \footnotesize
%
\bibitem{Abat} N. Abatangelo, \textit{Large $s$-harmonic functions and boundary blow-up
solutions for the fractional laplacian.}    Discrete Contin. Dyn. Syst. A,  35;  (2015), no. 12,  5555-5607. 
%
\bibitem{ABR} S. Axler, P. Bourdon, and W. Ramey,  \textit{Harmonic function theory. Second edition.} Graduate Texts in Mathematics, 137. Springer-Verlag, New York, 2001.
%
\bibitem{BGR} R. M. Blumenthal, R. K. Getoor and D. B. Ray,\textit{ On the distribution of first hits for the symmetric stable processes.} Trans. Amer. Math. Soc. 99 (1961) 540-554.


\bibitem{BKN} K. Bogdan, T. Kulczycki, A. Nowak,
\textit{Gradient estimates for harmonic and $q$-harmonic functions of
symmetric stable processes.} Illinois J. Math. 46 (2002), no. 2,
541-556.
%
\bibitem{BB}K. Bogdan and T. Byczkowski, \textit{ Potential theory of Schr\"{o}dinger operator based on fractional Laplacian.}  Probab. Math. Statist. 20 (2000), no. 2, Acta Univ. Wratislav. No. 2256, 293-335. 
%
\bibitem{BB-Schr} K. Bogdan, T. Byczkowski, \textit{Potential theory for the $\alpha$-stable Schr\"{o}dinger operator on bounded
Lipschitz domains.} Studia Math. 133 (1999), no. 1, 53-92.
%
 \bibitem{CDL}W. Chen, L. D'Ambrosio and  Y. Li, \textit{Some Liouville theorem for the fractional Laplacian.} Nonlin. Anal.  121,  (2015) 370-381.
%

%
\bibitem{FV} F. Ferrari and I. Verbitsky,\textit{ Radial fractional Laplace operators and Hessian inequalities.} J. Differential Equations 253 (2012), no. 1, 244-272. 
%
\bibitem{FW-Liouville} M. M. Fall and T. Weth,\textit{ Liouville theorems for a general class of nonlocal operators.}      http://arxiv.org/abs/1504.00419.
%
\bibitem{FW-half} M. M. Fall and T. Weth, \textit{Monotonicity and nonexistence results for some fractional elliptic problems in the half space.}  http://arxiv.org/abs/1309.7230 (To appear in Com. Contemp. Math. DOI: 10.1142/S0219199715500121)
%
 \bibitem{LL} E. H. Lieb, M. Loss, \textit{ Analysis.} Second edition. Graduate
  Studies in Mathematics, 14. American Mathematical Society,
  Providence, RI, 2001.
%
\bibitem{EN} E. Nelson,\textit{ A proof of Liouville's theorem.}  Proc. Amer. Math. Soc. 12 (1961), 995.
%
\bibitem{R-O-S} X. Ros-Oton and J. Serra, \textit{Regularity theory for general stable operators.} http://arxiv.org/abs/1412.3892.
%
\bibitem{RWXZ} R. Zhuo, W. Chen, X. Cui and Z. Yuan, \textit{ A Liouville theorem for
the fractional Laplacian.} http://arxiv.org/pdf/1401.7402.
%

%


  %

    \end{thebibliography}
\end{document}